\newtheorem{theorem}{Theorem}[section]
\newtheorem{lemma}[theorem]{Lemma}
\theoremstyle{definition}
\theoremstyle{remark}
\newtheorem{remark}[theorem]{Remark}
\numberwithin{equation}{section}
\def\E{{\mathbb E}}
\def\P{{\mathbb P}}
\def\D{{\mathbb D}}
\begin{document}

\title{\bf Correlation structure of time-changed Pearson diffusions}



\author{Jebessa B. Mijena}
\address{Jebessa B. Mijena, 231 W. Hancock St, Campus Box 17, Department of Mathematics,
Georgia College \& State University, Milledgeville, GA 31061}
\email{jebessa.mijena@gcsu.edu}

\author{Erkan Nane}
\address{Erkan Nane, 221 Parker Hall, Department of Mathematics and Statistics,
Auburn University, Auburn, Al 36849}
\email{nane@auburn.edu}
\urladdr{http://www.auburn.edu/$\sim$ezn0001}

\begin{abstract}

The stochastic solution to  diffusion equations with polynomial coefficients is called a Pearson diffusion. If the  time derivative is replaced by a distributed fractional derivative, the stochastic solution is called a fractional Pearson diffusion. This paper develops a formula  for the covariance function of a fractional Pearson diffusion in steady state, in terms of generalized Mittag-Leffler functions. That formula shows that fractional Pearson diffusions are long-range dependent, with a correlation that falls off like a power law, whose exponent equals the smallest order of the distributed fractional derivative.
\end{abstract}

\keywords{Pearson diffusion, Fractional derivative, Correlation function, Generalized Mittag-Leffler function}

\maketitle


\section{Introduction}

In this paper we will study time--changed Pearson diffusions. Some versions of this process have been studied recently by Leonenko et al. \cite{leonenko-0, leonenko}. They considered the inverse stable subordinator as the time change process. They have studied the governing equations and correlation structure of the time changed Pearson diffusions. We will extend their results to  the time--changed Pearson diffusions where the time change  processes  are   inverse of mixtures of stable subordinators.

To introduce Pearson diffusion, let
\begin{equation}
\mu(x) = a_0 + a_1x\ \  \mathrm{and}\ \
D(x) =
\frac{\sigma^2(x)}{2}
= d_0 + d_1x + d_2x^2
. 
\end{equation}

The solution  $X_1(t)$ of the stochastic differential equation
\begin{equation}dX_1(t) =\mu(X_1(t))dt + \sigma (X_1(t))dW(t),
\end{equation}
where $W(t)$ is a standard Brownian motion, is
 called a Pearson diffusion.

 Special cases of this equation have been studied: $X_1(t)$ is called  Ornstein–-Uhlenbeck process \cite{uhlenbeck-30} when $\sigma(x)$ is a positive constant;
 $X_1(t)$  is  called the Cox–-Ingersoll-–Ross (CIR) process, when $d_2 = 0$, which is used in finance \cite{CIR-85}.

  The study of Pearson diffusions began with
Kolmogorov \cite{kolmogorov-31}. Let $p_1(x, t;y)$ denote the conditional probability density of $x = X_1(t)$
given $y = X_1(0)$, i.e., the transition density of this time-homogeneous Markov process. $p_1(x, t;y)$ is the fundamental solution to the
Kolmogorov backward  equation (Fokker–-Planck equation)
\begin{equation}\label{pearson-diffuion-density-pde}
\frac{\partial}{\partial t}p_1(x, t;y)=\mathcal{G}p_1(x, t;y) = \left[\mu(y)\frac{\partial}{\partial y} + \frac{\sigma^2(y)}{2}\frac{\partial^2}{\partial y^2}\right]p_1(x, t;y),
\end{equation}
with the  initial condition $p_1(x,0;y) = \delta(x-y)$. In this case  $X_1(t)$ is called  the stochastic solution to  the backward equation \eqref{pearson-diffuion-density-pde}.

The Caputo fractional derivative \cite{Caputo} is defined for $0<\beta<1$ as
\begin{equation}\label{CaputoDef}
\frac{\partial^\beta u(t,x)}{\partial t^\beta}=\frac{1}{\Gamma(1-\beta)}\int_0^t \frac{\partial
u(r,x)}{\partial r}\frac{dr}{(t-r)^\beta} .
\end{equation}
Its Laplace transform
\begin{equation}\label{CaputolT}
\int_0^\infty e^{-st} \frac{\partial^\beta u(t,x)}{\partial
t^\beta}\,dt=s^\beta \tilde u(s,x)-s^{\beta-1} u(0,x),
\end{equation}
where $\tilde u(s,x) = \int_0^\infty e^{-st}u(t,x)\, dt$ and incorporates the initial value in the same way as the first
derivative.
The distributed order fractional derivative is
\begin{equation}\label{DOFDdef}
\D^{(\mu)}u(t,x):=\int_0^1\frac{\partial^\beta u(t,x)}{\partial
t^\beta} \mu(d\beta),
\end{equation}
where $\mu$ is a finite Borel measure with $\mu(0,1)>0$.

The
solution to the distributed order fractional diffusion equation
\begin{equation}\label{DOFCPdef-eq}
\D^{(\mu)} u(t,y)=\int_0^1\frac{\partial^\beta u(t,y)}{\partial
t^\beta} \mu(d\beta)= \mathcal{G}u(t,y)=\left[\mu(y)\frac{\partial}{\partial y} + \frac{\sigma^2(y)}{2}\frac{\partial^2}{\partial y^2}\right]u(t,y),
\end{equation}
 is called distributed order(time-changed) Pearson diffusion.

A stochastic process $X(t), \ t>0$ with $\E(X(t))=0$ and $\E(X^2(t))<\infty$ is said to have short range dependence if for fixed $t>0$,  $\sum_{h=1}^\infty \E(X(t)X(t+h))<\infty$, otherwise it is said to have  long-range dependence.

Let $0<\beta_{1}<\beta_{2}<\cdot\cdot\cdot < \beta_{n}<1$.
In this paper we study the correlation structure of the stochastic solution of the distributed order time fractional equation \eqref{DOFCPdef-eq} with
$$\D^{(\mu)}u(t,y)=\sum_{i=1}^{n}c_{i}\frac{\partial^{\beta_{i}}u(t,y)}{\partial t^{\beta_{i}}},$$
and derive a formula \eqref{correlationfun} and \eqref{correlationfun2} for the correlation in terms of generalized Mittag-Leffler functions. In addition, we obtain an asymptotic expansion \eqref{asymptotic}, to show that the correlation falls off like $t^{-\beta_1}$ for large $t,$ thus demonstrating that distributed order fractional Pearson diffusions exhibit long-range dependence.

\section{Distributed order fractional Pearson Diffusion}

Let $m(x)$ be the steady-state distribution of $X_1(t)$. The generator associated with the backward equation \eqref{DOFCPdef-eq}
\begin{equation}\label{heat-generator}
\mathcal{G}p_1(x, t;y) = \left[\mu(y)\frac{\partial}{\partial y} + \frac{\sigma^2(y)}{2}\frac{\partial^2}{\partial y^2}\right]p_1(x, t;y),
\end{equation}
has a set of eigenfunctions   that solve the equation $\mathcal{G}Q_n(y) = -\lambda_nQ_n(y)$ with eigenvalues $0 = \lambda_0 < \lambda_1 < \lambda_2 < \cdots$ that form an orthonormal
basis for  $L^2(m(y)dy)$. In this paper we will consider three cases as in the papers \cite{leonenko-0, leonenko}:
\begin{enumerate}
\item  $d_1 = d_2 = 0$ and $d_0 > 0$, then $m(y)$ is a normal density, and $Q_n$ are Hermite polynomials;
\item
$d_2 = 0$,  $m(y)$ is a gamma density, and $Q_n$ are Laguerre polynomials;
\item   $D
''(y) < 0 $ with two positive real roots, $m(y)$ is
a beta density, and $Q_n$ are Jacobi polynomials.
\end{enumerate}
In the remaining cases, the spectrum of $\mathcal{G}$ has a continuous part, and some
moments of $X_1(t)$ do not exist. In every case, $m(y)$ is one of the Pearson distributions \cite{pearson-14}. For the remainder of this paper,
we will assume one of the three cases (Hermite, Laguerre, Jacobi), so that all moments exist.

By separation of variables we can show that the transition density of $X_1(t)$ is given by
\begin{equation}p_1(x, t;y) = m(x)
\sum_{n=0}^\infty
e^{-\lambda_n t}Q_n(x)Q_n(y). 
\end{equation}
See \cite{leonenko} and \cite[section 7.4]{meerschaert-skorskii-book} for more details of this derivation.

Let $ D(t)$ be a subordinator
with ${\mathbb E}[e^{-s D(t)}]=e^{-t\psi(s)}$, where
\begin{equation}\label{phiWdef}
\psi(s)=\int_0^\infty(e^{-s x}-1)\phi(dx) .
\end{equation}
Then the associated L\'evy measure is
\begin{equation}\label{psiWdef}
\phi(t,\infty)=\int_0^1 t^{-\beta}\nu(d\beta).
\end{equation}
An easy computation gives
\begin{equation}\begin{split}\label{psiW}
\psi(s)
&= \int_0^1  s^\beta \Gamma(1-\beta) \nu(d\beta)=\int_0^1  s^\beta \mu(d\beta) .
\end{split}\end{equation}
Here we define $\mu(d\beta)=\Gamma(1-\beta) \nu(d\beta)$.

Let
\begin{equation}\label{Epsi-def}
E(t)=\inf\{\tau\geq 0:\ D(\tau)>t\},
\end{equation}
be the inverse subordinator.
 Since
$\phi(0,\infty)=\infty$ in \eqref{phiWdef}, Theorem 3.1
 in \cite{mark} implies that $E(t)$ has a Lebesgue density
\begin{equation}\label{E-lebesgue-density}
f_{E(t)}(x)=\int_0^t \phi(t-y,\infty) P_{D(x)}(dy) .
\end{equation}
Note that $E(t)$ is almost surely continuous and  nondecreasing.

Since the  distributed order  time-fractional analogue \eqref{DOFCPdef-eq} to the  equation \eqref{pearson-diffuion-density-pde} is a fractional Cauchy problem of the form
\begin{equation}\label{DOFCPdef-1}
\D^{(\mu)} p_\mu(t,x)= \mathcal{G}p_\mu(t,x),
\end{equation}
a general semigroup result \cite[Theorem 4.2]{jebessa-nane-pams} implies that
\begin{equation}\label{distributed-transition-density}
p_\mu(x, t;y) =
\int_0^\infty
p_1(x,u;y)f_{E(t)}(u)du, 
\end{equation}
where
$f_{E(t)}(u)$ is the probability density of   inverse subordinator.
\begin{lemma}[\cite{m-n-v-jmaa}]\label{eigenvalue-problem}
For any $\lambda>0$, $h(t, \lambda)=\int_0^\infty e^{-\lambda
x}f_{E(t)}(x)\,dx=\E [e^{-\lambda E(t)}]$ is a mild solution of the
distributed-order fractional differential equation
\begin{equation}\label{dist-order-density-pde}
\D^{(\mu)}h(t,\lambda)=-\lambda h(t, \lambda); \ \ h(0, \lambda)=1.
\end{equation}

\end{lemma}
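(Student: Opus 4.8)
The plan is to prove the claim by passing to the Laplace transform in the time variable $t$ and invoking uniqueness. I will read ``mild solution'' in the sense that the $t$-Laplace transform $\tilde h(s,\lambda)=\int_0^\infty e^{-st}h(t,\lambda)\,dt$ satisfies the Laplace-transformed version of \eqref{dist-order-density-pde}; since the Laplace transform is injective on continuous functions of exponential order, it suffices to compute $\tilde h(s,\lambda)$ two ways and match them. First I would compute $\tilde h$ directly from the probabilistic representation, and then I would compute what $\tilde h$ must be if $h$ solves the stated equation, and check the two agree.

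For the direct computation, start from the density formula \eqref{E-lebesgue-density}, which exhibits $f_{E(t)}(x)$ as the convolution in $t$ of the tail $\phi(\cdot,\infty)$ with the law $P_{D(x)}$. Using \eqref{psiWdef} together with $\mu(d\beta)=\Gamma(1-\beta)\nu(d\beta)$ and \eqref{psiW}, the $t$-Laplace transform of the tail is
\begin{equation}
\int_0^\infty e^{-st}\phi(t,\infty)\,dt=\int_0^1 s^{\beta-1}\Gamma(1-\beta)\,\nu(d\beta)=\frac{\psi(s)}{s}.
\end{equation}
Combining this with the convolution theorem and $\E[e^{-sD(x)}]=e^{-x\psi(s)}$ from \eqref{phiWdef} gives $\int_0^\infty e^{-st}f_{E(t)}(x)\,dt=\frac{\psi(s)}{s}e^{-x\psi(s)}$. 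Integrating against $e^{-\lambda x}\,dx$ and applying Fubini then yields
\begin{equation}
\tilde h(s,\lambda)=\int_0^\infty e^{-\lambda x}\,\frac{\psi(s)}{s}e^{-x\psi(s)}\,dx=\frac{\psi(s)}{s\bigl(\lambda+\psi(s)\bigr)}.
\end{equation}

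For the second computation, apply the Caputo Laplace transform \eqref{CaputolT} inside the distributed-order derivative \eqref{DOFDdef} and integrate against $\mu(d\beta)$; using $\psi(s)=\int_0^1 s^\beta\mu(d\beta)$ and the initial value $h(0,\lambda)=1$, the transform of $\D^{(\mu)}h$ is $\psi(s)\tilde h(s,\lambda)-\psi(s)/s$. Setting this equal to the transform $-\lambda\tilde h(s,\lambda)$ of the right-hand side and solving gives exactly $\tilde h(s,\lambda)=\psi(s)/\bigl(s(\lambda+\psi(s))\bigr)$, matching the expression above; uniqueness of the Laplace transform closes the argument. I expect the main obstacle to be the rigorous justification of the interchanges of integration: verifying that Fubini applies to the double integral defining $\tilde h$, and that the convolution identity for \eqref{E-lebesgue-density} transforms as claimed, which requires the integrability furnished by $\phi(0,\infty)=\infty$ and the finiteness of $\mu$ on $(0,1)$. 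A secondary point worth stating carefully is the precise meaning of ``mild solution'' so that matching transforms genuinely suffices.
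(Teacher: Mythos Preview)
The paper does not give its own proof of this lemma; it is quoted from \cite{m-n-v-jmaa}. Your Laplace-transform argument is correct and is in fact the standard route: the identity $\int_0^\infty e^{-st}f_{E(t)}(x)\,dt=\frac{\psi(s)}{s}e^{-x\psi(s)}$ you derive is precisely the one the paper invokes (citing \cite{mark}) and uses in \eqref{laplaceofPhi} to compute $\mathcal{L}(\Phi_\theta(t);s)=\psi(s)/\bigl(s(\theta+\psi(s))\bigr)$, which is your $\tilde h(s,\lambda)$ with $\theta$ in place of $\lambda$.
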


 Then it follows from Lemma \ref{eigenvalue-problem} and equation \eqref{distributed-transition-density}  that the transition density of $X_1(E(t))$ is given by
\begin{equation}\begin{split}
p_\mu(x, t;y)& =
m(x)
\sum_{n=0}^\infty Q_n(x)Q_n(y)
\int_0^\infty e^{-\lambda_n u} f_{E(t)}(u)du\\
&= m(x)
\sum_{n=0}^\infty Q_n(x)Q_n(y)
h(t,\lambda_n).
\end{split}
\end{equation}
We state the next theorem as a result  of  the observations above. This theorem extends the results in Leonenko et al. \cite{leonenko-0} and Meerschaert et al. \cite{m-n-v-jmaa}. The proof follows  with similar line of ideas as in  \cite{leonenko-0} and  \cite{m-n-v-jmaa}.
\begin{theorem}
Let $(l, L)$ be an interval such that $D(x) > 0$ for all $x \in  (l, L)$.
 Suppose that, the function $g \in  L^2(m(x)dx) $ is such that $\sum_{n}g_nQ_n$
with $g_n=
\int_l^L
g(x)Q_n(x)m(x)dx$ converges to $g$
uniformly on finite intervals $[y_1,y_2] \subset  (l, L)$. Then the fractional Cauchy problem
\begin{equation}\label{DOFCPdef-thm}
\D^{(\mu)} u(t,y)= \mathcal{G}u(t,y),
\end{equation}
with initial condition $u(0,y) = g(y)$ has a strong solution $u = u(t,y)$ given by
\begin{equation}\label{series-solution-frac-pde}
\begin{split}
u(t,y) &= \E(g(X_1(E(t)))|X_1(0)=y) =\int_l^L
p_\mu(x, t;y)g(x)dx
\\&
 =
m(x)
\sum_{n=0}^\infty g_n h(t,\lambda_n) Q_n(y)
\end{split}
\end{equation}
The series in \eqref{series-solution-frac-pde} converges absolutely for each fixed $t > 0,y \in (l, L)$, and \eqref{DOFCPdef-thm} holds pointwise.
\end{theorem}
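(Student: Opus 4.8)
The plan is to establish the series representation \eqref{series-solution-frac-pde} first and then verify, term by term, that it genuinely solves the Cauchy problem \eqref{DOFCPdef-thm}. First I would start from the subordination identity \eqref{distributed-transition-density} together with the spectral expansion $p_1(x,u;y) = m(x)\sum_n e^{-\lambda_n u} Q_n(x) Q_n(y)$, substitute both into $u(t,y) = \int_l^L p_\mu(x,t;y) g(x)\,dx$, and interchange the sum with the integrals in $x$ and in $u$. The orthonormality of the $Q_n$ in $L^2(m(x)dx)$ collapses the $x$-integral into the coefficient $g_n = \int_l^L g(x) Q_n(x) m(x)\,dx$, while the $u$-integral becomes $\int_0^\infty e^{-\lambda_n u} f_{E(t)}(u)\,du = h(t,\lambda_n)$ by the definition of $h$ in Lemma \ref{eigenvalue-problem}. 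This yields $u(t,y) = \sum_n g_n h(t,\lambda_n) Q_n(y)$, i.e. \eqref{series-solution-frac-pde}. The initial condition is then immediate: since $E(0)=0$ almost surely, $h(0,\lambda_n) = \E[e^{-\lambda_n E(0)}] = 1$, so at $t=0$ the series reduces to $\sum_n g_n Q_n(y)$, which equals $g(y)$ by the uniform-convergence hypothesis.

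Next I would address convergence and the legitimacy of the interchanges used above. Because $E(t) \ge 0$ almost surely, one has $0 \le h(t,\lambda_n) = \E[e^{-\lambda_n E(t)}] \le 1$ for every $n$, so each term of the series is dominated by $|g_n Q_n(y)|$; this bound, combined with the uniform convergence of $\sum_n g_n Q_n$ on the finite interval, justifies the dominated-convergence arguments in the derivation. For absolute convergence at fixed $t>0$ I would sharpen this to a Mittag-Leffler-type decay estimate of the form $h(t,\lambda_n) \le C_t/\lambda_n$, valid because $h(t,\lambda_n)$ solves the eigenvalue equation \eqref{dist-order-density-pde}, and then pair it with the growth control of $\lambda_n$, $g_n$, and $Q_n(y)$ available in the three classical (Hermite, Laguerre, Jacobi) cases.

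To show $u$ is a strong solution I would apply the two operators term by term. Since $Q_n$ does not depend on $t$ and $\mathcal{G}Q_n(y) = -\lambda_n Q_n(y)$, we get $\mathcal{G}[g_n h(t,\lambda_n) Q_n(y)] = -\lambda_n g_n h(t,\lambda_n) Q_n(y)$. On the other hand, since $Q_n(y)$ is $t$-independent and $h(t,\lambda_n)$ solves \eqref{dist-order-density-pde} by Lemma \ref{eigenvalue-problem}, we also get $\D^{(\mu)}[g_n h(t,\lambda_n) Q_n(y)] = g_n Q_n(y)\,\D^{(\mu)}h(t,\lambda_n) = -\lambda_n g_n h(t,\lambda_n) Q_n(y)$. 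Thus each term satisfies \eqref{DOFCPdef-thm} identically, and summing gives $\D^{(\mu)}u = \mathcal{G}u = -\sum_n \lambda_n g_n h(t,\lambda_n) Q_n(y)$ pointwise, provided the term-by-term application of $\D^{(\mu)}$ and $\mathcal{G}$ is valid.

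The main obstacle is precisely this last justification: passing $\mathcal{G}$ and $\D^{(\mu)}$ through the infinite sum requires uniform convergence of the differentiated series $\sum_n \lambda_n g_n h(t,\lambda_n) Q_n(y)$ on compact subintervals $[y_1,y_2] \subset (l,L)$, together with uniform convergence of the series formed with the spatial derivatives $Q_n'$ and $Q_n''$ that enter $\mathcal{G}$. The extra factor $\lambda_n$ is what makes this delicate, and it must be absorbed by the decay $h(t,\lambda_n) \le C_t/\lambda_n$ and by the decay of the coefficients $g_n$, weighed against the polynomial growth in $n$ of $\lambda_n$, $Q_n$, $Q_n'$, and $Q_n''$. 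I would carry this out case by case, using the known asymptotics of the Hermite, Laguerre, and Jacobi polynomials and their eigenvalues, following the estimates in \cite{m-n-v-jmaa} and \cite{leonenko-0}.
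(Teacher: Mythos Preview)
Your proposal is correct and matches the paper's approach: the paper does not give a self-contained proof of this theorem but simply states that ``the proof follows with similar line of ideas as in \cite{leonenko-0} and \cite{m-n-v-jmaa},'' and your sketch---subordination identity plus spectral expansion, termwise verification via Lemma~\ref{eigenvalue-problem}, and case-by-case justification of the interchanges using the Hermite/Laguerre/Jacobi asymptotics from those same references---is exactly that line of argument, spelled out in more detail than the paper itself provides.
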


Let $0<\beta_{1}<\beta_{2}<\cdot\cdot\cdot < \beta_{n}<1$.
In this paper we study the correlation structure of the stochastic solution of the distributed order time fractional equation \eqref{DOFCPdef-eq} with
$$\D^{(\mu)}=\sum_{i=1}^{n}c_{i}\frac{\partial^{\beta_{i}}g(x,t)}{\partial t^{\beta_{i}}},$$
this corresponds to the case where
\begin{equation}\label{n-term-laplace-exponent}
\psi(s)=c_{1}s^{\beta_{1}}+c_{2}s^{\beta_{2}}+\cdots +c_{n}s^{\beta_{n}}.
\end{equation}
In this case the L\'evy subordinator can be written as
$$D_\psi(t)=(c_1)^{1/\beta_1}D^1(t)+(c_2)^{1/\beta_2}D^2(t)+\cdots+ (c_n)^{1/\beta_n}D^n(t),$$
where $D^1(t), D^2(t),\cdots , D^n(t)$ are independent  stable subordinators of index $0<\beta_{1}<\beta_{2}<\cdot\cdot\cdot < \beta_{n}<1$.

In this paper we give the correlation function of the time-changed Pearson diffusion $X_1(E)$ where $E$ is the inverse of subordinator with Laplace exponent  given by \eqref{n-term-laplace-exponent}.

\section{Correlation structure}
In this section we present the correlation structure of the time-changed  Pearson diffusion $X_1(E(t))$  when $X_1(E(t))$ is the stochastic solution of the equation
$$c_1\frac{\partial^{\beta_1}p(x,t;y)}{\partial t^{\beta_1}}+c_2\frac{\partial^{\beta_2}p(x,t;y)}{\partial t^{\beta_2}}=\mathcal{G}p(x,t;y).$$
In this case  $E(t)$  is the inverse of $D(t)$  that has the following  Laplace exponent
\begin{equation}\label{psiD2}
\psi(s)= c_1s^{\beta_1} + c_2s^{\beta_2},
\end{equation}
for $c_1,c_2\geq 0$, $c_1 + c_2 = 1,$ and $\beta_1<\beta_2.$

In what follows we use notation for  the density of the inverse subordinator $E(t)$ as $f_{E(t)}(u)=f_t(u).$

Let $\Phi_\theta(t) = \int_{0}^\infty e^{-\theta u}f_t(u)\ du$. Using
$ \int^{\infty}_{0}e^{-st}f_{t}(u)dt=\frac{1}{s}\psi(s)e^{-u\psi(s)}$ (\cite{mark}, 3.13) and Fubini's theorem, the Laplace transform of $\Phi_\theta(t)$ is given by

\begin{eqnarray}\label{laplaceofPhi}
\mathcal{L}({\Phi}_\theta(t);s)&=& \int_{0}^{\infty}e^{-\theta u}\int_{0}^{\infty}e^{-st}f_t(u)dt\ du\nonumber\\
&=&\frac{\psi(s)}{s}\int_{0}^{\infty}e^{-u(\theta + \psi(s))}\ du\nonumber\\
&=&\frac{\psi(s)}{s(\theta + \psi(s))}= \frac{c_1s^{\beta_1-1} + c_2s^{\beta_2-1}}{\theta + c_1s^{\beta_1} + c_2s^{\beta_2}}.
\end{eqnarray}
In order to invert analytically the Laplace transform \eqref{laplaceofPhi}, we can apply the well-known expression of the Laplace transform of the generalized Mittag-Leffler function (see \cite{saxena}, eq. 9), i.e.

\begin{equation}\label{laplaceofGMF}
\mathcal{L}(t^{\gamma-1}E^{\delta}_{\beta,\gamma}(\omega t^{\beta});s)=s^{-\gamma}\left(1-\omega s^{-\beta}\right)^{-\delta},
\end{equation}
where $\mbox{Re}(\beta)>0, \mbox{Re}(\gamma)>0, \mbox{Re}(\delta)>0$ and $s>|\omega|^{\frac{1}{Re(\beta)}}.$ The Generalized Mittag-Leffler (GML) function is defined as
\begin{equation}\label{GMLfunction}
E^{\gamma}_{\alpha,\beta}(z) = \displaystyle\sum_{j=0}^{\infty}\frac{(\gamma)_jz^j}{j!\Gamma(\alpha j + \beta)},\ \ \ \alpha, \beta\in \mathbb{C}, Re(\alpha), Re(\beta), Re(\gamma)>0,
\end{equation}
where $(\gamma)_j=\gamma(\gamma + 1)\cdots (\gamma+j-1)$ (for $j=0,1,\ldots,\ \mbox{and}\ \gamma\neq 0$) is the Pochammer symbol and $(\gamma)_0=1.$ When $\gamma = 1$ \eqref{GMLfunction} reduces to the Mittag-Leffler function
\begin{equation}
E_{\alpha,\beta}(z) = \displaystyle\sum_{j=0}^{\infty}\frac{z^j}{\Gamma(\alpha j + \beta)}.
\end{equation}Now using formulae $(26)$ and $(27)$ of \cite{saxena}, we get
\begin{eqnarray}\label{thefunctPhi}
\Phi_\theta(t)
&=&\displaystyle\sum_{r=0}^{\infty}\left(-\frac{c_1t^{\beta_2-\beta_1}}{c_2}\right)^r E^{r+1}_{\beta_2, (\beta_2-\beta_1)r + 1}\left(-\frac{\theta t^{\beta_2}}{c_2}\right)\\&-&\displaystyle\sum_{r=0}^{\infty}\left(-\frac{c_1t^{\beta_2-\beta_1}}{c_2}\right)^{r+1} E^{r+1}_{\beta_2, (\beta_2-\beta_1)(r+1) + 1}\left(-\frac{\theta t^{\beta_2}}{c_2}\right).\nonumber
\end{eqnarray}
Clearly, $\Phi_\theta(0) = 1.$
\begin{remark}
Consider the special case $c_1=0,c_2=1:$ the formula for $\Phi_\theta(t)$ reduces, in this case, to
\begin{equation}
\Phi_\theta(t)= E_{\beta_2, 1}(-\theta t^{\beta_2})=\displaystyle\sum_{j=0}^{\infty}\frac{\left(-\theta t^{\beta_2}\right)^j}{\Gamma(1+j\beta_2)},
\end{equation}
as shown in Bingham \cite{bingham} and Bondesson, Kristiansen, and Steute \cite{bondesson} when $E(t)$ is standard inverse $\beta_2-$stable subordinator.
\end{remark}
Now we compute the expected value of the inverse subordinator $E(t)$. First, we find the Laplace transform of $\mathbb{E}(E(t)) = \int_{0}^{\infty}xf_t(x)\ dx$. Using Fubini's theorem, we have
\begin{eqnarray}\label{expectedvalueofE_t}
\mathcal{L}(\mathbb{E}(E(t));\lambda)&=&\int_{0}^{\infty}e^{-\lambda t}\mathbb{E}(E(t))\ dt \\
&=&\int_{0}^{\infty}x\int_{0}^{\infty}e^{-\lambda t}f_{t}(x)\ dt\ dx\nonumber\\
&=&\frac{\psi(\lambda)}{\lambda}\int_{0}^{\infty}x e^{-x\psi(\lambda)}\ dx=\frac{1}{\lambda\psi(\lambda)}\nonumber\\
&=&\frac{1}{c_1\lambda^{\beta_1 +1} + c_2\lambda^{\beta_2 + 1}} = \frac{1}{c_2}\frac{\lambda^{-(\beta_2 + 1)}}{1 + \frac{c_1}{c_2}\lambda^{-(\beta_2-\beta_1)}}.\nonumber
\end{eqnarray}
Therefore, using \eqref{laplaceofGMF} when $\delta = 1$ we have
\begin{equation}\label{expectedvalueexpre}
\mathbb{E}(E(t)) = \frac{1}{c_2}t^{\beta_2}E_{\beta_2-\beta_1, \beta_2 + 1}\left(-\frac{c_1}{c_2}t^{\beta_2-\beta_1}\right).
\end{equation}
\begin{remark}
Consider the special case $c_1=0,c_2=1:$ the expected value of $E(t)$ reduces, in this case, to
\begin{equation}
\mathbb{E}(E(t)) =\frac{t^{\beta_2}}{\Gamma(1+\beta_2)},
\end{equation}
thus giving the well-known formula $\mathbb{E}(E(t)) = t^{\beta_2}/\Gamma(1+\beta_2)$ for the mean of the standard inverse $\beta_2-$stable subordinator \cite[Eq.(9)]{baeumer}
\end{remark}
\subsection{Correlation function}
If the time-homogeneous Markov process $X_1(t)$ is in steady state, then its probability density ${ m}(x)$ stays the same over all time.  We will say that the time-changed Pearson diffusion $X_1(E(t))$ is in steady state if it starts with the distribution $m(x)$. The time-changed Pearson diffusion in steady state has mean $ \mathbb{E}[X_1(E(t))] =\mathbb{E}[X_1(t)]= m_1$ and variance Var$[X(E(t))] = $Var$[X_1(t)]=m_2^2$ which do not vary over time. The stationary Pearson diffusion has correlation function
\begin{equation}\label{pearsoncorrelation}
\mbox{corr}[X_1(t), X_1(s)] = \mbox{exp}(-\theta|t-s|),
\end{equation}
where the correlation parameter $\theta = \lambda_1$ is the smallest positive eigenvalue of the generator in equation \eqref{heat-generator}\cite{leonenko-0}. Thus the Pearson diffusion exhibits \textit{short-term dependence}, with a correlation function that falls off exponentially. The next result gives formula for the correlation function of time-changed Pearson diffusion in steady state. This is our main result.
\begin{theorem}\label{main-theorem}
Suppose that $X_1(t)$ is a Pearson diffusion in steady state, so that its correlation function is given by \eqref{pearsoncorrelation}. Then the correlation function of the corresponding time-changed Pearson diffusion $X(t) = X_1(E(t)),$ where $E(t)$ is an independent inverse  subordinator \eqref{Epsi-def} of $D(t)$ with Laplace exponent \eqref{psiD2}, is given by
\begin{equation}\label{correlationfun}
\mbox{corr}[X(t), X(s)] =\theta\int_{y=0}^{s}h(y)\Phi_\theta(t-y)\ dy + \Phi_\theta(t),
\end{equation}
where $h(y) = \frac{1}{c_2}y^{\beta_2 - 1}E_{\beta_2 - \beta_1, \beta_2}\left(-\frac{c_1}{c_2}y^{\beta_2-\beta_1}\right)$ and $\Phi_\theta(t)$ is given by \eqref{thefunctPhi}.
\end{theorem}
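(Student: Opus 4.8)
The plan is to reduce the correlation to a Laplace functional of the increments of the inverse subordinator, and then to exploit the renewal structure of $E$. First I would use independence: since $E(t)=E(t,\omega)$ is independent of $X_1$ and $X_1$ is stationary with $\E[X_1(u)X_1(v)]=m_1^2+m_2^2 e^{-\theta|u-v|}$ by \eqref{pearsoncorrelation}, conditioning on the pair $(E(t),E(s))$ gives $\E[X(t)X(s)]=m_1^2+m_2^2\,\E[e^{-\theta|E(t)-E(s)|}]$, whence $\mathrm{corr}[X(t),X(s)]=\E[e^{-\theta|E(t)-E(s)|}]$. Taking $s\le t$ and using that $E$ is a.s. nondecreasing (so $E(s)\le E(t)$), this reduces the theorem to evaluating $G(t,s):=\E[e^{-\theta(E(t)-E(s))}]$.

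Next I would convert $G(t,s)$ into an integral against the Stieltjes measure $dE$. Because $E$ is a.s. continuous and nondecreasing with $E(0)=0$ (as noted after \eqref{E-lebesgue-density}), the change-of-variables formula for Lebesgue--Stieltjes integrals applied to the $C^1$ map $u\mapsto e^{\theta u}$ gives $e^{\theta E(s)}=1+\theta\int_0^s e^{\theta E(y)}\,dE(y)$, with no jump corrections precisely because $E$ is continuous. Multiplying by $e^{-\theta E(t)}$ and taking expectations (Tonelli, the integrand being nonnegative), I obtain
\[
G(t,s)=\E\big[e^{-\theta E(t)}\big]+\theta\,\E\Big[\int_0^s e^{-\theta(E(t)-E(y))}\,dE(y)\Big]=\Phi_\theta(t)+\theta\,\E\Big[\int_0^s e^{-\theta(E(t)-E(y))}\,dE(y)\Big],
\]
where $\Phi_\theta(t)=\E[e^{-\theta E(t)}]$ is the function in \eqref{thefunctPhi}. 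This already isolates the leading term $\Phi_\theta(t)$ of \eqref{correlationfun}, so the remaining task is to evaluate the $dE$-integral.

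The heart of the proof is the evaluation of $\theta\,\E[\int_0^s e^{-\theta(E(t)-E(y))}\,dE(y)]$, which I expect to be the main obstacle. Two ingredients are needed. First, the potential (renewal) measure of $E$ is $\E[dE(y)]=h(y)\,dy$: from $E(y)=\int_0^y dE(r)$ one gets $\E[E(y)]=\int_0^y \E[dE(r)]$, and differentiating the explicit mean \eqref{expectedvalueexpre} via the identity $\frac{d}{dy}\big[y^{\beta_2}E_{\beta_2-\beta_1,\beta_2+1}(\omega y^{\beta_2-\beta_1})\big]=y^{\beta_2-1}E_{\beta_2-\beta_1,\beta_2}(\omega y^{\beta_2-\beta_1})$ (with $\omega=-c_1/c_2$) yields exactly the stated $h(y)$. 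Second, $E(y)=\inf\{\tau:D(\tau)>y\}$ is the first-passage time of $D$ over level $y$, and $dE$ charges only those $y$ at which the overshoot vanishes, $D(E(y))=y$; at such $y$ the strong Markov property of $D$ at the stopping time $E(y)$ makes the increment $E(t)-E(y)$ an independent copy of $E(t-y)$. The delicate point is to combine these rigorously, i.e. to justify the Palm/renewal identity $\E[\int_0^s g(E(t)-E(y))\,dE(y)]=\int_0^s \E[g(E(t-y))]\,\E[dE(y)]$ for nonnegative $g$; with $g(x)=e^{-\theta x}$ this gives $\int_0^s \Phi_\theta(t-y)\,h(y)\,dy$ and hence \eqref{correlationfun}.

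As consistency checks I would verify the two endpoints through the Laplace transform \eqref{laplaceofPhi}. At $s=0$ the formula collapses to $\Phi_\theta(t)$, which is correct since $E(0)=0$ forces $G(t,0)=\E[e^{-\theta E(t)}]$. At $s=t$ the convolution identity $\theta\int_0^t h(y)\Phi_\theta(t-y)\,dy+\Phi_\theta(t)=1$ must hold (as $G(t,t)=1$); this follows because $\mathcal{L}[h](\lambda)=1/\psi(\lambda)$ (from \eqref{expectedvalueofE_t}, since $h=\frac{d}{dy}\E[E(\cdot)]$ with $\E[E(0)]=0$) and $\mathcal{L}[\Phi_\theta](\lambda)=\psi(\lambda)/\big(\lambda(\theta+\psi(\lambda))\big)$, so their product times $\theta$ plus $\mathcal{L}[\Phi_\theta]$ equals $\theta/\big(\lambda(\theta+\psi(\lambda))\big)+\psi(\lambda)/\big(\lambda(\theta+\psi(\lambda))\big)=1/\lambda$, the transform of the constant $1$. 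These checks confirm both the structure of \eqref{correlationfun} and the correctness of the renewal step, and the explicit $\Phi_\theta$ and $h$ (with $\psi(s)=c_1s^{\beta_1}+c_2 s^{\beta_2}$ from \eqref{psiD2}) then give the stated closed form.
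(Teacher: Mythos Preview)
Your proposal is correct and takes a genuinely different route from the paper's proof. The paper never writes the pathwise identity $e^{\theta E(s)}=1+\theta\int_0^s e^{\theta E(y)}\,dE(y)$ nor invokes a Palm/renewal argument. Instead, it works directly with the bivariate distribution $H(u,v)=\P[E(t)\le u,E(s)\le v]$: it applies a bivariate integration-by-parts formula to $\int\!\!\int e^{-\theta|u-v|}H(du,dv)$, splits the resulting integral over $\{u<v\}$, $\{u=v\}$, $\{u>v\}$, and for the $\{u>v\}$ part uses $\{E(t)\ge u\}=\{D(u)<t\}$ together with the independent-increments decomposition $\P[D(u)<t,D(v)<s]=\int_0^s g(y,v)\int_0^{t-y} g(x,u-v)\,dx\,dy$. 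The functions $h(y)=\int_0^\infty g(y,v)\,dv$ and $k(\theta,x)=\int_0^\infty e^{-\theta z}g(x,z)\,dz$ are then identified via their Laplace transforms $1/\psi(s)$ and $1/(\theta+\psi(s))$, after which the pieces are summed to yield \eqref{correlationfun}.

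Your argument is more conceptual: the change of variables isolates $\Phi_\theta(t)$ immediately, and the substitution $y=D(\tau)$ turns $\int_0^s e^{-\theta(E(t)-E(y))}\,dE(y)$ into $\int_0^\infty \mathbf{1}_{\{D(\tau)\le s\}}e^{-\theta(E(t)-\tau)}\,d\tau$, to which the strong Markov property of $D$ at the deterministic time $\tau$ applies directly (so no genuine Palm machinery is needed, only Fubini and the identity $h(y)=\int_0^\infty g(y,\tau)\,d\tau$ for the potential density). This avoids the three-way split and the somewhat delicate diagonal contribution $I_2$ in the paper. What the paper's approach buys is that every step is an explicit Laplace-transform computation requiring no stochastic-calculus change of variables; what yours buys is a shorter derivation that makes the renewal structure transparent and would carry over verbatim to the $n$-term case of Theorem~\ref{main-theorem2} without any new computation.
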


\begin{proof}[\bf  Proof of Theorem \ref{main-theorem}]
We use the method employed by Leonenko et al \cite{leonenko} with crucial changes.
Write \begin{eqnarray}\mbox{corr}[X(t), X(s)] &=& \mbox{corr}[X_1(E(t)), X_1(E(s))]\nonumber\\
&=&\int_{0}^{\infty}\int_{0}^{\infty}e^{-\theta|u-v|}H(du, dv),\label{corrfun}\end{eqnarray}
a Lebesgue-Stieltjes integral with respect to the bivariate distribution function $H(u, v) :=\mathbb{P}[E(t)\leq u, E(s)\leq v]$ of the process $E(t).$

To compute the integral in \eqref{corrfun}, we use the bivariate integration by parts formula \cite[Lemma 2.2]{gill}
\begin{eqnarray}
\int_0^a\int_0^b G(u,v)H(du, dv) &=& \int_0^a\int_0^b H([u, a]\times [v, b])G(du, dv)\nonumber\\ &+& \int_0^a H([u,a]\times(0,b])G(du,0)\nonumber\\ &+& \int_0^b H((0, a]\times [v,b])G(0, dv)\nonumber\\ &+& G(0,0)H((0, a]\times (0,b]),
\end{eqnarray}
with $G(u,v) = e^{-\theta|u-v|},$ and the limits of integration $a$ and $b$ are infinite:
\begin{eqnarray}
\int_0^\infty\int_0^\infty G(u,v)H(du, dv) &=& \int_0^\infty\int_0^\infty H([u, \infty]\times [v, \infty])G(du, dv)\nonumber\\ &+& \int_0^\infty H([u,\infty]\times(0,\infty])G(du,0)\nonumber\\ &+& \int_0^\infty H((0, \infty]\times [v,\infty])G(0, dv)\nonumber\\ &+& G(0,0)H((0, \infty]\times (0,\infty])\nonumber\end{eqnarray}
\begin{eqnarray}\label{corrfunction2}
&=&\int_0^\infty\int_0^\infty \mathbb{P}[E_t\geq u, E_s\geq v]G(du, dv)+\int_0^{\infty}\mathbb{P}[E_t\geq u]G(du,0)\nonumber\\ &+&\int_0^{\infty}\mathbb{P}[E_s\geq v]G(0,dv) + 1,\label{intbyparts}
\end{eqnarray}
since $E(t)>0$ with probability $1$ for all $t>0.$ Notice that $G(du,v) = g_v(u)\,du$ for all $v\geq 0,$ where
\begin{equation}\label{gfunction}
g_v(u) = -\theta e^{-\theta(u-v)}I\{u > v\} + \theta e^{-\theta(v-u)}I\{u\leq v\}.
\end{equation}
Integrate by parts to get
\begin{eqnarray}
\int_0^\infty\mathbb{P}[E(t)\geq u]G(du, 0)&=&\int_0^\infty (1-\mathbb{P}[E(t)<u])(-\theta e^{-\theta u})\,du\nonumber\\
&=&\left[e^{-\theta u}\mathbb{P}[E(t)\geq u]\right]_0^\infty + \int_0^\infty e^{-\theta u}f_t(u)\,du\nonumber\\
&=& \Phi_\theta(t) - 1.
\end{eqnarray}
Similarly, $$\int_0^{\infty}\mathbb{P}[E(s)\geq v]G(0, dv) = \int_0^\infty e^{-\theta v}f_s(v)\,dv -1=\Phi_\theta(s)-1,$$
and hence \eqref{corrfunction2} reduces to
\begin{equation}\label{genform}
\int_0^\infty\int_0^\infty G(u, v)H(du, dv) = I + \Phi_\theta(t) +  \Phi_\theta(s)- 1,
\end{equation}
where $$I = \int_0^\infty\int_0^\infty \mathbb{P}[E_t\geq u, E_s\geq v]G(du,dv).$$
Assume (without loss of generality) that $t\geq s$. Then $E_t\geq E_s$, so $\mathbb{P}[E(t)\geq u, E(s)\geq v] = \mathbb{P}[E(s)\geq v]$ for $u\leq v.$ Write $I = I_1 + I_2 + I_3,$ where
\begin{eqnarray}
I_1:= \int_{u < v}\mathbb{P}[E(t)\geq u, E(s)\geq v]G(du,dv) = \int_{u<v}\mathbb{P}[E(s)\geq v]G(du,dv)\nonumber\\ I_2:= \int_{u = v}\mathbb{P}[E(t)\geq u, E(s)\geq v]G(du,dv) = \int_{u=v}\mathbb{P}[E(s)\geq v]G(du,dv)\nonumber\\ I_3:= \int_{u \geq v}\mathbb{P}[E(t)\geq u, E(s)\geq v]G(du,dv).\nonumber \ \  \ \ \ \ \ \  \ \ \ \ \ \  \ \ \ \ \ \ \ \ \ \ \ \ \ \ \ \  \ \ \ \  \ \nonumber
\end{eqnarray}
Since $G(du,dv) = -\theta^2 e^{-\theta(v-u)}\,du\,dv$ for $u < v$, we may write
\begin{eqnarray}
I_1 &=& -\theta^2\int_{v=0}^\infty\int_{u=0}^v \mathbb{P}[E(s)\geq v]e^{\theta(u-v)}\,du\,dv\nonumber\\ &=&-\theta\int_{v=0}^\infty \mathbb{P}[E(s)\geq v]\left(1 - e^{-\theta v}\right)\,dv\nonumber\\ &=& -\theta\int_{v=0}^\infty \mathbb{P}[E(s)\geq v]\,dv + \theta\int_0^\infty e^{-\theta v}\mathbb{P}[E_s \geq v]\,dv\nonumber\\ &=& -\theta\mathbb{E}[E(s)] + \theta\int_0^\infty e^{-\theta v}\mathbb{P}[E(s) \geq v]\,dv,\nonumber
\end{eqnarray}
using the well-known formula $\mathbb{E}[X] = \int_0^\infty\mathbb{P}[X\geq x]\,dx$ for any positive random variable. Using integration by parts
$$\int_0^\infty e^{-\theta v}\mathbb{P}[E(s) \geq v]\,dv = \frac{1}{\theta} - \frac{1}{\theta}\int_0^\infty e^{-\theta v}f_s(v)\,dv=\frac{1}{\theta} - \frac{\Phi_\theta(s)}{\theta}.$$
So,
\begin{equation}\label{part_one}
I_1 = -\theta\ \mathbb{E}[E(s)] - \Phi_\theta(s) + 1.
\end{equation}
Since $G(du,v) = g_v(u)du,$ where the function \eqref{gfunction} has jump of size $2\theta$ at the point $u = v$, we also have
\begin{equation}
I_2 = 2\theta\int_0^\infty\mathbb{P}[E(s)\geq v]\,dv = 2\theta\ \mathbb{E}[E(s)].
\end{equation}
Since $G(du,dv) = -\theta^2e^{-\theta(u-v)}\,du\ dv$ for $u>v$ as well, we have
\begin{eqnarray}\label{part3}
I_3 = -\theta^2\int_{v=0}^\infty\mathbb{P}[E(t)\geq u, E(s)\geq v]\int_{u=v}^\infty e^{-\theta(u-v)}\, du\, dv.
\end{eqnarray}
Next, we obtain an expression for $\mathbb{P}[E(t)\geq u, E(s)\geq v].$ Since the process $E(t)$ is inverse to the stable subordinator $D(u),$ we have $\{E(t)> u\}=\{D(u)<t\}$ \cite[Eq. (3.2)]{mark2}, and since $E(t)$ has a density, it follows that $\mathbb{P}[E(t)\geq u, E(s)\geq v] = \mathbb{P}[D(u)<t, D(v)<s].$ Since $D(u)$ has stationary independent increments, it follows that
\begin{eqnarray}
\P[E(t)\geq u, E(s) \geq v]& =& \mathbb{P}[D(u)<t, D(v)<s]\nonumber\\
&=& \mathbb{P}[(D(u) - D(v)) + D(v) < t, D(v) <s]\nonumber\\
&=&\int_{y=0}^sg(y,v)\int_{x=0}^{t-y}g(x, u -v)\, dx\,dy,
\end{eqnarray}
substituting the above expression into \eqref{part3} and using the Fubini Theorem, it follow that
\begin{eqnarray}\label{part3estimate}
I_3&=& -\theta^2\int_{y=0}^s\int_{x=0}^{t-y}\int_{v=0}^\infty g(y,v)\ dv\int_{u=v}^\infty g(x,u-v)e^{-\theta(u-v)}du\ dx\ dy\nonumber\\
&=&-\theta^2\int_{y=0}^s\int_{x=0}^{t-y}\int_{v=0}^\infty g(y,v)\ dv\int_{z=0}^\infty g(x,z)e^{-\theta z}dz\,dx\,dy.
\end{eqnarray}
Let $h(y) = \int_{v=0}^\infty g(y,v)\ dv$ and $k(\theta, x) = \int_{z=0}^\infty g(x,z)e^{-\theta z}\ dz.$ So, the Laplace transform of $h(y)$ is given by
\begin{eqnarray}
\mathcal{L}(h(y);s) = \int_{y=0}^\infty e^{-sy}h(y)\ dy &= & \int_{y=0}^\infty e^{-sy}\int_{v=0}^\infty g(y,v)\,dv\,dy\nonumber\\
&=&\int_{y=0}^\infty \int_{v=0}^\infty e^{-sy}g(y,v)\,dy\,dv\nonumber\\
&=&\int_{v=0}^\infty e^{-v\psi(s)}\ dv\nonumber\\
&=&\frac{1}{\psi_(s)} = \frac{1}{c_1s^{\beta_1}+c_2s^{\beta_2}}.\nonumber
\end{eqnarray}
Now to get $h(y)$ take inverse Laplace of $\mathcal{L}(h(y);s)$ by \eqref{laplaceofGMF} and $\delta=1$. This implies,
\begin{equation}\label{gestimate}h(y) = \frac{1}{c_2}y^{\beta_2 - 1}E_{\beta_2 - \beta_1, \beta_2}\left(-\frac{c_1}{c_2}y^{\beta_2-\beta_1}\right).\end{equation}
Similarly, take the Laplace transform of $k(\theta, x):$
\begin{eqnarray}\label{estimateofkfun}
\mathcal{L}(k(\theta, x);s) &=& \int_{x=0}^\infty e^{-sx}\int_{z=0}^\infty e^{-\theta z}g(x,z)\,dz\,dx\nonumber\\
&=&\int_{z=0}^\infty e^{-\theta z}\int_{x=0}^\infty e^{-s x}g(x,z)\,dx\,dz\nonumber\\
&=&\int_{z=0}^\infty e^{-\theta z} e^{-z\psi(s)}dz\nonumber\\
&=& \frac{1}{\theta + \psi(s)}\nonumber\\
& =& \frac{1}{\theta + c_1s^{\beta_1}+c_2s^{\beta_2}} = \frac{1}{\theta}\left(1-\frac{c_1s^{\beta_1} + c_2s^{\beta_2}}{\theta + c_1s^{\beta_1} + c_2s^{\beta_2}}\right)=\mathcal{L}\left(-\frac{1}{\theta}\frac{d}{dx}\Phi_\theta(x)\right).
\end{eqnarray}
The uniqueness theorem of the Laplace transform applied to the $x-$variable implies that for any $\theta>0$ we have
\begin{equation}\label{relationbnkandPhi}
k(\theta,x) = -\frac{1}{\theta}\frac{d}{dx}\Phi_\theta(x).
\end{equation}
Substituting \eqref{gestimate} and \eqref{relationbnkandPhi} in \eqref{part3estimate}, we have
\begin{eqnarray}\label{I3estimate}
I_3 &=& -\theta^2\int_{y=0}^s\int_{x=0}^{t-y}h(y)\left(-\frac{1}{\theta}\frac{d}{dx}\Phi_\theta(x)\right)dx\ dy\nonumber\\
&=&\theta\int_{y=0}^s h(y)\int_{x=0}^{t-y}\frac{d}{dx}\Phi_\theta(x)\ dx\ dy\nonumber\\
&=&\theta\int_{y=0}^{s}h(y)(\Phi_\theta(t-y)-1)\ dy\nonumber\\
&=&\theta\int_{y=0}^{s}h(y)\Phi_\theta(t-y)\ dy - \theta\int_{y=0}^{s}h(y)\ dy.
\end{eqnarray}
Using properties of Laplace transform, we have
$$\mathcal{L}\left(\int_{y=0}^{s}h(y)\ dy;\mu\right)=\frac{1}{\mu}\mathcal{L}(h(y);\mu)=\frac{1}{c_1\mu^{\beta_1+1}+c_2\mu^{\beta_2+1}}.$$
Hence, using uniqueness of Laplace transform
$$\int_{y=0}^{s}h(y)\ dy = \mathbb{E}(E(s)).$$
Therefore,
\begin{eqnarray}\label{I3estimate}
I_3 = \theta\int_{y=0}^{s}h(y)\Phi_\theta(t-y)\ dy - \theta\mathbb{E}(E(s)).\end{eqnarray}

Now it follows using \eqref{corrfun} and \eqref{genform} that
\begin{eqnarray}
\mbox{corr}[X(t), X(s)]
&=&\int_{0}^{\infty}\int_{0}^{\infty}e^{-\theta|u-v|}H(du, dv)\nonumber\\
&=&I_1 + I_2 + I_3 +  \Phi_\theta(t) + \Phi_\theta(s) - 1\nonumber\\
&=&\left[-\theta\mathbb{E}(E(s)) -\Phi_\theta(s) + 1\right] + 2\theta\mathbb{E}(E(s))\nonumber\\
&+&\theta\int_{y=0}^{s}h(y)\Phi(t-y)\ dy - \theta\mathbb{E}(E(s))+\Phi_\theta(t) + \Phi_\theta(s) - 1\nonumber\\
&=&\theta\int_{y=0}^{s}h(y)\Phi_\theta(t-y)\ dy + \Phi_\theta(t),\nonumber
\end{eqnarray}
which agrees with \eqref{correlationfun}.
\end{proof}

\begin{remark}
When $t=s,$ it must be true that $\mbox{corr}[X(t), X(s)]=1$. To see that this follows from \eqref{correlationfun}, recall the formula for Laplace transform of the convolution:
$$\mathcal{L}((h *\Phi_\theta)(t)) = \mathcal{L}(h(t))\mathcal{L}(\Phi_\theta(t)).$$
So,
\begin{eqnarray}
\mathcal{L}\left(\int_{y=0}^{t}h(y)\Phi_\theta(t-y)\ dy;\mu\right)&=&\mathcal{L}\left(h(t);\mu\right)\mathcal{L}\left(\Phi_\theta(t);\mu\right)\nonumber\\
&=&\left(\frac{1}{c_1s^{\beta_1}+c_2s^{\beta_2}}\right)\left(\frac{c_1s^{\beta_1-1}+c_2s^{\beta_2-1}}{\theta + c_1s^{\beta_1}+c_2s^{\beta_2}}\right)\nonumber\\
&=&\frac{s^{-1}}{\theta + c_1s^{\beta_1}+c_2s^{\beta_2}}.\nonumber
\end{eqnarray}
Using properties of Laplace transform and \eqref{estimateofkfun}, we see that
$$\int_{y=0}^{t}h(y)\Phi_\theta(t-y)\ dy = \int_{0}^{t}-\frac{1}{\theta}\frac{d}{dy}\Phi_\theta(y)\ dy=\frac{1 - \Phi_\theta(t)}{\theta}.$$
Then, it follows from \eqref{correlationfun} that $\mbox{corr}[X(t), X(s)]=1$ when $t=s.$
\end{remark}

\begin{remark}
Recall \cite[Eq. 2.59, p.59]{beghin} that
\begin{equation}\label{asympoticofGML}
E^{k}_{v,\beta}(-ct^v)=\frac{1}{c^kt^{vk}\Gamma(\beta-vk)} + o(t^{-vk}),\ \ \ t\rightarrow \infty,
\end{equation}
\begin{equation}\label{asymptoticforsmalltime}
E^{k}_{v,\beta}(-ct^v)\simeq \frac{1}{\Gamma(\beta)}-\frac{ct^{v}k}{\Gamma(\beta+v)},\ \ \ \ \ 0<t<<1.
\end{equation}

For $k=1$, using \eqref{asympoticofGML} the asymptotic behavior of $\mathbb{E}(E(t))$ for $t\rightarrow\infty$ is given by

$$
\E(E(t))=\frac{t^{\beta_1}}{c_1\Gamma(1+\beta_1)}+o(t^{\beta_1 - \beta_2}),\ \ \ t\to\infty.
$$
Similarly, the asymptotic behavior for small $t$ can be deduced using \eqref{asymptoticforsmalltime} when $k=1$:
$$\mathbb{E}(E(t))\simeq\frac{t^{\beta_2}}{c_2\Gamma(1+\beta_2)}-\frac{c_1t^{2\beta_2-\beta_1}}{c_2\Gamma(1+2\beta_2-\beta_1)},\ \ \ 0<t<<1.$$
\end{remark}
\begin{remark}\label{asympoticforPhi}
Stationary Pearson diffusion exhibit short-range dependence, since their correlation function \eqref{pearsoncorrelation} falls off exponentially fast. However, the correlation function of time-changed Pearson diffusion falls off like a power law with exponent $\beta_1\in(0,1), (\beta_1 < \beta_2)$ and so this process exhibits long-range dependence. To see this, fix $s>0$ and recall 
that by \eqref{relationbnkandPhi}
$$\Phi_\theta(t) = 1-\int_{x=0}^{t}\theta k(\theta, x)\,dx.$$
For fixed $\theta$, $\theta k(\theta, x)$ is a density function for $x\geq 0.$ Since $$\lim_{s\rightarrow 0}\frac{\theta}{c_1}s^{-\beta_1}\left(\frac{c_{1}s^{\beta_{1}}+c_{2}s^{\beta_{2}}}{\theta+c_{1}s^{\beta_{1}}+c_{2}s^{\beta_{2}}}\right)= 1.$$
Then by \cite[Example. (c), p.447]{feller} we get

$$\Phi_\theta(t)\simeq \frac{c_1}{\Gamma(1-\beta_1)\theta t^{\beta_1}}, \ \ \ \ \ t\rightarrow \infty,$$
which depends only on the smaller fractional index $\beta_1.$ You can also see \cite[Eq. 2.64]{beghin}.
Then $$\Phi_\theta(t(1-sy/t))\simeq \frac{c_1}{\Gamma(1-\beta_1)\theta t^{\beta_1}(1-sy/t)^{\beta_1}}, \ \ \ t\rightarrow\infty\ \ \mbox{for any}\ \ y\in[0,1].$$
Using dominated convergence theorem $(|\Phi_\theta(t)|\leq 1)$ and \cite[Eq.(1.99)]{podlubny} we get
\begin{eqnarray}
\theta\int_{y=0}^{s}h(y)\Phi_\theta(t-y)\ dy&=&\theta s\int_{0}^{1}h(sz)\Phi_\theta(t(1-sz/t))\ dz\nonumber\\
&\sim& \frac{sc_1}{\Gamma(1-\beta_1)t^{\beta_1}}\int_{0}^{1}h(sz)\ dz\nonumber\\
&=&\frac{c_1}{c_2}\frac{s^{\beta_2}E_{\beta_2-\beta_1,\beta_2+1}\left(-\frac{c_1}{c_2}s^{\beta_2-\beta_1}\right)}{t^{\beta_1}\Gamma(1-\beta_1)},\nonumber
\end{eqnarray}
as $t\rightarrow\infty.$ It follows from \eqref{correlationfun} that for any fixed $s>0$ we have
\begin{equation}\label{asymptotic}
\mbox{corr}[X(t), X(s)] \sim \frac{1}{t^{\beta_1}\Gamma(1-\beta_1)}\left(\frac{c_1}{\theta}+\frac{c_1}{c_2}s^{\beta_2}E_{\beta_2-\beta_1,\beta_2+1}\left(-\frac{c_1}{c_2}s^{\beta_2-\beta_1}\right)\right),\ \
\end{equation}
as $t\rightarrow\infty.$

Now if we also let $s\rightarrow\infty,$ using \eqref{asympoticofGML} when $k=1:$
\begin{equation}
\mbox{corr}[X(t), X(s)] \sim \frac{1}{t^{\beta_1}\Gamma(1-\beta_1)}\left(\frac{c_1}{\theta}+\frac{s^{\beta_1}}{\Gamma(1+\beta_1)}\right),
\end{equation}
as $t\rightarrow\infty$ and $s\rightarrow\infty.$
\end{remark}

With  careful changes to the proof of Theorem \ref{main-theorem} we can prove the following extension.
\begin{theorem}\label{main-theorem2}
Suppose that $X_1(t)$ is a Pearson diffusion in steady state, so that its correlation function is given by \eqref{pearsoncorrelation}. Then the correlation function of the corresponding time-changed Pearson diffusion $X(t) = X_1(E(t)),$ where $E(t)$ is an independent inverse  subordinator \eqref{Epsi-def} of $D(t)$ with Laplace exponent \eqref{n-term-laplace-exponent}, is given by
\begin{equation}\label{correlationfun2}
\mbox{corr}[X(t), X(s)] =\theta\int_{y=0}^{s}h_n(y)\Phi_{\theta,n}(t-y)\ dy + \Phi_{\theta,n}(t),
\end{equation}
where Laplace transform of $h_n$ is given by
$$\tilde{h}_n(s)=\frac{1}{\psi(s)}=\frac{1}{c_{1}s^{\beta_{1}}+c_{2}s^{\beta_{2}}+\cdots +c_{n}s^{\beta_{n}}},$$
and the Laplace transform of  $\Phi_{\theta,n}$ is given by
$$
\tilde{\Phi}_{\theta,n}(s)=\frac{\psi(s)}{s(\theta+\psi(s))}=\frac{c_{1}s^{\beta_{1}}+c_{2}s^{\beta_{2}}+\cdots +c_{n}s^{\beta_{n}}}{s(\theta+c_{1}s^{\beta_{1}}+c_{2}s^{\beta_{2}}+\cdots +c_{n}s^{\beta_{n}})}.
$$
In this case
$\Phi_{\theta,n}(t)=\E(e^{-\theta E(t)})$ is the laplace transform of the inverse subordinator $E(t)$.

\end{theorem}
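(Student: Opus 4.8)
The plan is to retrace the proof of Theorem \ref{main-theorem} almost verbatim, observing that essentially none of its steps depend on $\psi$ having exactly two terms. First I would write
\[
\mbox{corr}[X(t),X(s)] = \int_0^\infty\int_0^\infty e^{-\theta|u-v|}H(du,dv),
\]
with $H(u,v)=\P[E(t)\le u, E(s)\le v]$, and apply the bivariate integration-by-parts formula of Gill exactly as before. The reduction to $I+\Phi_{\theta,n}(t)+\Phi_{\theta,n}(s)-1$ as in \eqref{genform}, and the splitting $I=I_1+I_2+I_3$ over the regions $u<v$, $u=v$, $u>v$, are purely measure-theoretic and carry over unchanged, since the integrand $G(u,v)=e^{-\theta|u-v|}$ and its jump structure \eqref{gfunction} are untouched by the number of terms in $\psi$.

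The only place where the form of $\psi$ enters is the evaluation of the auxiliary functions $h_n(y)=\int_0^\infty g(y,v)\,dv$ and $k(\theta,x)=\int_0^\infty g(x,z)e^{-\theta z}\,dz$. Here I would use the identity $\{E(t)>u\}=\{D(u)<t\}$ together with the stationary independent increments of $D$, exactly as in the derivation of \eqref{part3estimate}, to obtain $\mathcal{L}(h_n;s)=\int_0^\infty e^{-v\psi(s)}\,dv=1/\psi(s)$ and $\mathcal{L}(k(\theta,\cdot);s)=1/(\theta+\psi(s))$. The crucial observation is that these two Laplace-transform evaluations hold for \emph{any} Laplace exponent of the form \eqref{n-term-laplace-exponent}; only the final step, inverting back to an explicit generalized Mittag-Leffler series, exploited the two-term structure. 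Writing $1/(\theta+\psi(s))=\theta^{-1}\big(1-\psi(s)/(\theta+\psi(s))\big)$ and comparing with $\tilde\Phi_{\theta,n}(s)=\psi(s)/(s(\theta+\psi(s)))$ gives, by uniqueness of the Laplace transform in the $x$-variable, the same relation $k(\theta,x)=-\theta^{-1}\frac{d}{dx}\Phi_{\theta,n}(x)$ as in \eqref{relationbnkandPhi}.

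With these two identities in hand, the computations of $I_1$, $I_2$, $I_3$ proceed exactly as in \eqref{part_one}--\eqref{I3estimate}. In particular $\mathcal{L}(\int_0^s h_n(y)\,dy;\lambda)=1/(\lambda\psi(\lambda))=\mathcal{L}(\E(E(s));\lambda)$, so $\int_0^s h_n(y)\,dy=\E(E(s))$ by uniqueness, and the terms containing $\E(E(s))$ cancel in the same pattern as before, leaving
\[
\mbox{corr}[X(t),X(s)] = \theta\int_{y=0}^s h_n(y)\Phi_{\theta,n}(t-y)\,dy + \Phi_{\theta,n}(t),
\]
which is \eqref{correlationfun2}. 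Finally I would identify $\Phi_{\theta,n}(t)=\E[e^{-\theta E(t)}]$ by computing its Laplace transform from $\int_0^\infty e^{-st}f_t(u)\,dt=s^{-1}\psi(s)e^{-u\psi(s)}$ exactly as in \eqref{laplaceofPhi}. The only genuine obstacle is bookkeeping: one must confirm that every Laplace-transform identity invoked for the two-term case depended only on the abstract relation $\mathcal{L}(f_t(u);s)=s^{-1}\psi(s)e^{-u\psi(s)}$ and not on any explicit Mittag-Leffler inversion, so that no closed form for $h_n$ or $\Phi_{\theta,n}$ is required. This is precisely why the statement records only their Laplace transforms rather than series expansions.
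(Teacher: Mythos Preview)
Your proposal is correct and is exactly the approach the paper takes: the paper gives no separate argument for Theorem~\ref{main-theorem2} beyond the sentence ``With careful changes to the proof of Theorem~\ref{main-theorem} we can prove the following extension.'' You have spelled out precisely which steps survive unchanged and where the two-term structure was used only for the explicit Mittag-Leffler inversions, which is why the statement records $h_n$ and $\Phi_{\theta,n}$ only through their Laplace transforms $1/\psi(s)$ and $\psi(s)/(s(\theta+\psi(s)))$.
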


\begin{remark}
Using \cite[Eq.(5.37)]{podlubny} and \eqref{laplaceofGMF} we get the expression for $h_n(y):$
\begin{eqnarray}
h_n(y) &=& \frac{1}{c_n}\displaystyle\sum_{m=0}^{\infty}(-1)^m\displaystyle\sum_{\substack{k_0+k_1+\cdots+k_{n-2}=m\\
k_0\geq0, k_1\geq 0,\ldots, k_{n-2}\geq 0
}}\left(m;k_0, k_1, \ldots, k_{n-2}\right)\\
&&\prod_{i=0}^{n-2}\left(\frac{c_i}{c_n}\right)^{k_i}y^{(\beta_n-\beta_{n-1})(m+1)+\beta_{n-1}+\sum_{i=0}^{n-2}(\beta_{n-1}-\beta_i)k_i - 1}\nonumber\\
&\times&E^{m+1}_{\beta_n-\beta_{n-1}, (\beta_n-\beta_{n-1})(m+1)+\beta_{n-1}+\sum_{i=0}^{n-2}(\beta_{n-1}-\beta_i)k_i }\left(\frac{-c_{n-1}}{c_n}y^{\beta_n-\beta_{n-1}}\right),\nonumber
\end{eqnarray}
where $(m;k_0, k_1,\ldots,k_{n-2})$ are the multinomial coefficients.
\end{remark}
\begin{remark}
As in remark \eqref{asympoticforPhi}
$$\Phi_{\theta,n}(t) = 1-\int_{x=0}^{t}\theta k(x,\theta)\,dx,$$
where in this case $k(x,\theta)$ corresponds to the general case.
Since $$\lim_{s\rightarrow 0}\frac{\theta}{c_1}s^{-\beta_1}\left(\frac{c_{1}s^{\beta_{1}}+c_{2}s^{\beta_{2}}+\cdots +c_{n}s^{\beta_{n}}}{\theta+c_{1}s^{\beta_{1}}+c_{2}s^{\beta_{2}}+\cdots +c_{n}s^{\beta_{n}}}\right)= 1.$$
Then by \cite[Example. (c), p.447]{feller} we have
$$\Phi_{\theta,n}(t) \sim \frac{c_1}{\theta\Gamma(1-\beta_1)t^{\beta_1}},\ \ \ \mbox{as}\ \ \ t\rightarrow\infty,$$
which depends only on the smaller fractional index $\beta_1.$
Hence, for fixed $s>0$
\begin{equation}
\mbox{corr}[X(t), X(s)]\sim \frac{1}{t^{\beta_1}\Gamma(1-\beta_1)}\left(\frac{c_1}{\theta}+c_1\int_{0}^{s}h_n(y)\,dy\right).
\end{equation}

Therefore in this case the time-changed Pearson diffusion exhibits long-range dependence.
\end{remark}


\begin{thebibliography}{99}

\bibitem{baeumer} B. Baeumer, M.M. Meerschaert, {Fractional diffusion with two time scales,} Physica A 373 (2007) 237-251.
\bibitem{beghin} L.Beghin. {Random-time processes governed by differential equations of fractional distributed order}. Probab. Theory and Rel. Fields. {\bf 142} (2008), no. 3-4,  313-338.
\bibitem{bingham} N.H. Bingham, Limit theorems for occupation times of Markov processes, Z. Wahrscheinlichkeitstheor. Verwandte Geb. 17 (1971) 1--22.

\bibitem{bondesson} L. Bondesson, G. Kristiansen, F. Steutel, Infinite divisibility of random variables and their integer parts, Statist. Probab. Lett. 28 (1996) 271--278.
\bibitem{Caputo}  M. Caputo, Linear models of dissipation whose Q is almost frequency independent,
  Part II. {\it Geophys. J. R. Astr. Soc.} {\bf 13} (1967) 529--539.

\bibitem{CIR-85}J.C. Cox, J.E. Ingersoll Jr., S.A. Ross, A theory of the term structure of interest rates, Econometrica 53 (1985) 385–407.

\bibitem{feller} W. Feller, {An introduction to probability theory and its applications}, Volume II.

\bibitem{gill} R.D. Gill, M.J. van der Laan, J.A. Wellner, {Inefficient estimators of the bivariate survival function for three models,} Ann. Inst. Henri Poincar$\acute{\mbox{e}}$ 31 (3) (1995) 545 - 597.

\bibitem{kolmogorov-31}A.N. Kolmogorov, \"Uber die analytischen Methoden in der Wahrscheinlichkeitsrechnung (on analytical methods in probability theory), Math. Ann. 104
(1931) 415–458.
\bibitem{leonenko-0} N.N. Leonenko, M.M. Meerschaert, and A. Sikorskii, Fractional Pearson diffusions, Journal of Mathematical Analysis and Applications, Vol. 403 (2013), No. 2, pp. 532–546.

\bibitem{leonenko}N.N. Leonenko, M.M. Meerschaert, and A. Sikorskii, Correlation Structure of Fractional Pearson Diffusions, Computers and Mathematics with Applications, Vol. 66 (2013), No. 5, pp. 737–745.
\bibitem{mark2} M. M. Meerschaert, H.P. Scheffler, Limit theorems for continuous time random walks with infinite mean waiting time, J. Appl. Probab. 41 (2004) 623 - 638

\bibitem{mark} M. M. Meerschaert, H.P. Scheffler (2008), Triangular array limits for continuous time random walks, Stochastic processes and their applications, 1606-1633.


\bibitem{m-n-v-jmaa} Meerschaert, M. M., Nane, E. and Vellaisamy, P. (2011). Distributed-order fractional diffusions on bounded domains. J. Math. Anal. Appl. {\bf 379} (2011) 216-228.

\bibitem{meerschaert-skorskii-book} M.M. Meerschaert and A. Sikorskii, Stochastic Models for Fractional Calculus, De Gruyter Studies in Mathematics Vol. 43, 2012,
\bibitem{jebessa-nane-pams} J. Mijena and E. Nane. Strong analytic solutions of fractional Cauchy problems.  (To appear)  Proceedings of the American Mathematical Society. Url:http://arxiv.org/abs/1110.4158

 \bibitem{pearson-14}K. Pearson, Tables for Statisticians and Biometricians, Cambridge University Press, Cambridge, UK, 1914

\bibitem{podlubny} I. Podlubny, {Fractional differential equations}, Mathematics in Science and Engineering, Volume 198.

\bibitem{saxena} R.K. Saxena, A.M. Mathai, H.J. Haubold. {Reaction-diffusion  systems and nonlinear waves, Astrophysics and Space Science,} {\bf 305}, 297-303

\bibitem{uhlenbeck-30}G.E. Uhlenbeck, L.S. Ornstein, On the theory of Brownian motion, Phys. Rev. 36 (1930) 823–841





\end{thebibliography}
\end{document}